\newtheorem{lemma}{Lemma}
\newtheorem{theorem}{Theorem}
\newtheorem*{theorem1}{Theorem}
\newcommand{\grad}{\nabla}
\newcommand{\Bop}{\mathcal{B}}
\newcommand{\e}{\varepsilon}
\DeclareMathOperator*{\argmin}{arg\,min}
\title{Variational properties of the kinetic solutions of scalar conservation laws}
\author{Misha Perepelitsa}
\address{Misha Perepelitsa, Department of Mathematics, University of Houston, 651 PGH, Houston,
Texas 77204-3008.} 
\email{\tt misha@math.uh.edu}       
\date{\today}
\begin{document}

\begin{abstract}
We discuss properties of kinetic solutions of scalar conservation laws in the variational approach developed by Panov\cite{Panov1, Panov2} and also Brenier\cite{Brenier1}.
Our main result shows that such solutions can be considered as curves in a suitable Hilbert space with tangents that are unique minimizers of an interaction functional.  
\end{abstract}
\maketitle

\begin{section}{Introduction}
We consider a scalar conservation law
\begin{equation}
\label{SCL}
u_t{}+{}\grad_x\cdot f(u){}={}0,\quad (t,x){}\in\mathbb{R}_+\times\mathbb{R}^n,
\end{equation}
with the flux $f{}:{}\mathbb{R}{}\to{}\mathbb{R}^n.$ A function $u(t,x)$ is called an entropy weak solution if for any convex entropy/entropy flux pairs $(\eta(u(t,x)),q(u(t,x))$
of the flux function $f,$
\begin{equation}
\eta_t{}+{}\grad_x\cdot q{}\leq{}0,\quad \mathcal{D}'(\mathbb{R}^{n+1}_+).
\end{equation}
The existence of unique entropy solutions for \eqref{SCL} with the initial data $u(t=0){}={}u_0{}\in{}L^\infty(\mathbb{R}^n),$ as well as their stability,  was obtained in Kruzhkov\cite{Kruzhkov}. The solution can be described using the kinetic formulation, as proved by  Lions-Perthame-Tadmor\cite{LPT}. In this approach, $u(t,x)$ is a weak entropy solution iff  the kinetic density function
\begin{equation}
\label{KD}
Y(t,x,v){}={}\left\{ 
\begin{array}{rc}
1 & v{}\geq{}u(t,x),\\
0 & v{}<{}u(t,x),
\end{array}
\right.
\end{equation}
verifies the transport equation
\begin{equation}
\label{kinetic_1}
Y_t{}+{}f_v(v)\cdot\grad_x Y{}={}-\partial_v m,\quad \mathcal{D}'(\mathbb{R}^{n+2}_+),
\end{equation}
for some measure $m\in \mathcal{M}_+(\mathbb{R}^{n+2}_+).$ To be precise, in \cite{LPT}, the kinetic denisty $\chi(t,x,v){}={}H(v){}-{}Y(t,x,v),$ where $H=1,\,v\geq0,$ $H{}={}0,\,v<0,$ was used, but the result can be expressed through $Y(t,x,v)$ as well.

Condition  \eqref{kinetic_1} can be equivalently expressed via a variational form:  for any regular test functions $\tilde{Y}(x,v),$ nondecreasing in $v,$ it holds
\begin{equation}
\label{kinetic_2}
\iint (\tilde{Y}-Y)(Y_t{}+{}f_v\cdot\grad_xY)\,dxdv{}\geq{}0,\quad \mathcal{D}'(\mathbb{R}).
\end{equation}
Indeed, the equivalence holds because,
\[
\iint Y(Y_t{}+{}f_v\cdot\grad_xY)\,dxdv{}={}\partial_t\int|u(t,x)|\,dx{}={}0,
\]
if, for example, $u_0\in L^1(\mathbb{R}^n ),$ or periodic. 
An interesting kinetic formulation for weak solutions were obtained in Panov\cite{Panov1, Panov2}, by allowing generic nondecreasing in $v$ functions $Y(t,x,v)$ in \eqref{kinetic_2}, rather than functions with the range in $\{-1,0,1\}.$

Panov\cite{Panov1} introduced a class of strong measure-valued solutions on \eqref{SCL} as
the set of parametrized probability measures $\nu_{t,x}\in \mathcal{M}_+(\mathbb{R}),$ for which the level sets of their distribution functions $Y(t,x,v):=\nu_{t,x}((-\infty,v])$ are the graphs of weak entropy solutions of \eqref{SCL}, i.e. 
\begin{equation}
\label{kinetic_3}
u(t,x,\lambda){}={}\sup\{ v\,:\, Y(t,x,v)\leq \lambda\},
\end{equation}
is an entropy weak solution of \eqref{SCL} for any $\lambda\in (0,1).$ It was shown
that for each initial data $\nu_{0,x}$  there is a unique, global strong measure-valued solution. The weak entropy solutions of \eqref{SCL} are naturally contained in this approach as measures 
\begin{equation}
\label{kinetic:mv}
\nu_{t,x}{}={}\delta_{u(t,x)}.
\end{equation}

The set strong measure-valued solutions is the subclass of the entropy measure-valued solutions were introduced by Tartar\cite{Tartar} in his compensated compactness method. Such solutions were further studied by DiPerna\cite{DiPerna}, who showed that
the weak entropy solutions are unique in the class of measure-valued solutions and by Schochet\cite{Schochet}, who  showed that entropy measure-valued solutions with a prescribed initial data are not unique. 

An equivalent formulation of a strong measure-valued solution was given in Panov\cite{Panov2} where it was shown that $Y(t,x,v)$ is the distribution of a strong measure valued solution iff it verifies \eqref{kinetic_2}.
Note that the formulation \eqref{kinetic_2} in addition to \eqref{kinetic_1} prescribes a non-trivial, non-linear constraint:
\begin{equation}
\label{kinetic_5:2}
\iint Y^2(t,x,v)\,dxdv{}={}const.,\quad \mathrm{a.e.}{} t>0.
\end{equation}
This  result was later re-discovered by Brenier\cite{Brenier1} in the following form. He proved that  $Y$ is the solution of \eqref{kinetic_2} iff $u(t,x,\lambda)$ from \eqref{kinetic_3} is an entropy weak solution of \eqref{SCL} for any $\lambda$ in the interior of  the range of $Y.$  In that paper, the variational formulation
\eqref{kinetic_2} is expressed as a differential inclusion on a Hilbert space of $L^2$ integrable in $(x,v)$ functions:
\begin{equation}
\label{kinetic_4}
Y_t{}\in {} - f_v\cdot\grad_x Y{}-{}\partial K(Y),
\end{equation}
where $\partial K(Y)$ is the subdifferential to the indicator function of a convex, closed cone $K$ consisting of all non-decreasing in $v$ functions, see section \ref{examples} for details.
Operator appearing on the right of \eqref{kinetic_4} is monotone, providing the uniqueness of solutions  and if in addition it is maximal then the existence follows from the classical results, for example Brezis\cite{Brezis}. 

Additionally to the existence/uniqueness/stability of solution Brenier\cite{Brenier1} proves the {\it regularity} of solutions of \eqref{kinetic_4}: if the initial data $Y_{0}(x,v)$ is differentiable, $\grad_xY_0\in L^2_{x,v},$ then
\[
\partial_tY,\,\grad_xY{}\in{} L^\infty((0,+\infty);L^2_{x,v}).
\]
It can also be shown that $\partial_vY{}\in {} L^\infty((0,+\infty);L^2_{x,v}),$ if in addition $\partial_v Y_0\in L^2_{x,v}.$    

The results \cite{Panov1, Panov2, Brenier1} show a remarkable fact that {\it all} weak entropy solutions of \eqref{SCL} can be obtained through \eqref{kinetic_3} (or \eqref{kinetic:mv})  from a globally {\it stable} and {\it regular} (if $\partial_x Y_0\in L^2_{x,v}$)
kinetic densities $Y(t,x,v)$ -- solutions of \eqref{kinetic_4} (or \eqref{kinetic_2}). 

In this paper we further investigate the properties of solution of \eqref{kinetic_4}. Such solutions can be considered as curves $Y(t)$ with values in the admissible cone $K$ for which the tangent $\partial_t Y(t)$ belongs to the tangent cone $T_K(Y(t)){}={}
\mathrm{closure} \;\{ h(\tilde{Y}-Y(t))\;:\; h>0,\,\tilde{Y}\in K\}.$

 Our main result shows that $Y(t)$ is the solution of \eqref{kinetic_4} iff $\partial_t Y(t)$ minimizes the functional $\|V+f_v\cdot\grad_x Y(t)\|_{L^2_{x,v}}$ over all directions $V\in T_K(Y(t)).$ This functional can considered as an ``instantaneous'' interaction functional. Since set $K$ restrict solutions only in $v$ direction this functional is local in $x,$ i.e.,
\[
\min_{T_K(Y(t))} \|V+f_v\cdot\grad_x Y(t)\|_{L^2_{x,v}}{}={}
\| \min_{T_K(Y(t,x,\cdot))}\|V(\cdot){}+{}f_v(\cdot)\cdot\grad_x Y(t,x,\cdot)\|_{L^2_v}\|_{L^2_x}
\]
 Zero minimal value is attained on solutions $Y(t)$ that are simply transported, i.e. $Y_t{}+{}f_v\cdot\grad_x Y{}={}0,$ while remaining in $K.$ At the level of weak entropy solutions of \eqref{SCL} such $Y(t)$ corresponds to classical solutions. For shock waves it is proportional to the shock strength, see section \ref{examples}.

After the prove of this result, which based on the fact that solutions of $\partial_tY{}\in{}-\mathcal{A}(Y(t)),$ with maximal monotone operator $\mathcal{A}$ are {\it slow} solutions, i.e., the solutions for which $\|\mathcal{A}(Y(t))\|$ is minimal, we show that there are travelling wave solution to \eqref{kinetic_4} that correspond to the shock waves of \eqref{SCL}. Such travelling waves move with the actual shock speed $\sigma{}={}\Delta f/\Delta u.$ The shock speed appears in solving a minimization problem
\[
\min_{V\in T_K(Y(t))}\|V{}+{}f_v\cdot\grad_x Y(t)\|_{L^2_{x,v}}.
\]

The shock profiles of this type are obtained by smoothing in $x$ direction the kinetic density \eqref{KD} of the shock wave $u(t,x).$  This however is rather exceptional case. In the last part of this paper we give an example that shows that generically if $u(t,x)$ is a weak entropy solution  that contain interacting waves and if $Y(t,x,v)$ -- its kinetic density, then $Y_\e{}={}Y(t,x,v)*\omega_\e(x),$  is not a solutions of \eqref{kinetic_4}. This happens because the constraint \eqref{kinetic_5:2} is non-linear in $Y$ and  does not commute with averaging.

\end{section}

\begin{section}{General theory}

Let $\mathbb{H}$ be the space of $2L$--periodic in $x,$ functions $Y(x,u)$ of $(x,u)\in\mathbb{R}^n\times[0,1],$ with the norm
\[
\|Y\|^2{}={}\langle Y,Y\rangle {}={}\int_{\Pi} \int_0^1 Y^2(x,u)\,dudx,\quad \Pi{}={}[-L,L]^n.
\]
Let $K\subset \mathbb{H}$ b a set of $Y$'s non-decreasing in $u.$ $K$ is a closed cone and for any $Y\in K,$ we denote 
\begin{equation}
\label{tangentcone}
T_K(Y){}={}\mbox{$\mathbb{H}$-- closure of }\{h(\tilde{Y}-Y)\,:\,h\geq0,\,\tilde{Y}\in K\},
\end{equation}
the tangent cone to $K$ at $Y$ and the normal cone:
\begin{equation}
\label{normalcone}
\partial K(Y){}={}\{Z\in\mathbb{H}\,:\,\langle Z, \tilde{Y}-Y\rangle{}\leq{}0,\,\forall \tilde{Y}\in K\}.
\end{equation}
Also by $N{}={}\{Z\in\mathbb{H}\,:\,\langle Z, \tilde{Y}\rangle{}\leq{}0,\,\forall \tilde{Y}\in K\}$ we denote the polar cone to $K.$ 
We consider the Cauchy problem
\begin{equation}
\label{prob:main}
\partial_t Y{}+{}f_u\cdot\grad_x Y{}\in{} -\partial K(Y),\quad Y(t=0){}={}Y_0.
\end{equation}
The flux function  $f\in Lip([0,1])^n$ -- Lipschitz continuous on $[0,1].$ 

It was shown in Brenier\cite{Brenier1} that for any $Y_0\in K,$ there is a unique solutions of \eqref{prob:main}
 $Y\in C([0,+\infty);\mathbb{H}),$ for any $t\geq0, $ $Y(t)\in K$, and if $\grad_x Y_0\in\mathbb{H},$ then also
\[
\partial_t Y,\,\grad_x Y{}\in{} L^\infty((0,+\infty);\mathbb{H}).
\]

Our main result contained in the following theorem.
\begin{theorem}
Let $Y_0\in K$ and $\grad_x Y_0\in\mathbb{H}.$ For the solution $Y\in C([0,+\infty);\mathbb{H})$ of \eqref{prob:main},  a.e. $t>0,$
\begin{equation}
\label{slow1}
\|\partial_t Y(t)\|{}={}\min_{Z\in \partial K(Y(t))}||Z{}+{} f_v\cdot\grad_x Y(t)\|,
\end{equation}
and 
\begin{equation}
\label{min}
\|\partial_tY(t){}+{}f_v\cdot \grad_x Y(t)\|{}={}\min_{V\in T_K(Y(t))}\|V{}+{}f_v\cdot \grad_x Y(t)\|.
\end{equation}
Conversely, each of the conditions \eqref{slow1}, \eqref{min} defines a unique solution of the problem \eqref{prob:main}. 
\end{theorem}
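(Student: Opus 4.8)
The plan is to extract from \eqref{prob:main} the single relation $\partial_t Y(t)=P_{T_K(Y(t))}(-g(t))$, where $g(t):=f_u\cdot\grad_x Y(t)$ and $P_C$ denotes the metric projection onto a closed convex set $C$; both \eqref{slow1} and \eqref{min} then reduce to Pythagoras for the mutually polar cones $T_K(Y)$ and $\partial K(Y)$. First I would record the structural facts that make this work. The regularity $\grad_x Y\in L^\infty((0,\infty);\mathbb{H})$ quoted above, together with $f\in\mathrm{Lip}([0,1])^n$, guarantees $g(t)\in\mathbb{H}$ and that $t\mapsto Y(t)$ is Lipschitz into $\mathbb{H}$, hence differentiable for a.e.\ $t$. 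Since $Y(t)\in K$ for every $t$, each difference quotient $h^{-1}(Y(t+h)-Y(t))$ lies in the generating set of \eqref{tangentcone}, so $\partial_t Y(t)\in T_K(Y(t))$. Finally, because $K$ is a cone, any $Z\in\partial K(Y)$ satisfies $\langle Z,Y\rangle=0$ (test \eqref{normalcone} with $\tilde Y=2Y$ and $\tilde Y=\tfrac12 Y$), hence $Z\in N$ and $\langle Z,V\rangle\le 0$ for every $V\in T_K(Y)$; this confirms the standard fact that $T_K(Y)$ and $\partial K(Y)=N_K(Y)$ are mutually polar cones.

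The crux is an orthogonality relation, which is where the ``slow solution'' mechanism enters. Writing the solution of \eqref{prob:main} as $\partial_t Y(t)=-g(t)-Z(t)$ with $Z(t)\in\partial K(Y(t))$ a.e., I would prove
\[
\langle Z(t),\partial_t Y(t)\rangle=0\qquad\text{for a.e.\ }t.
\]
Indeed, for every $s\ge 0$ we have $Y(s)\in K$ and $\langle Z(t),Y(t)\rangle=0$, so $\langle Z(t),Y(s)-Y(t)\rangle\le 0$. Dividing by $s-t$ and letting $s\downarrow t$ gives $\langle Z(t),\partial_t Y(t)\rangle\le 0$, while letting $s\uparrow t$ gives the reverse inequality; at a point of differentiability the two coincide to $0$. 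This step is the main obstacle: it is the only place where one must exploit that $Y$ is an \emph{evolution} in $K$ rather than an arbitrary curve, and it replaces the abstract minimal-section theorem for maximal monotone flows by a direct computation.

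Granting the orthogonality, I would finish the forward direction at a fixed time. For any $V\in T_K(Y)$ one has $\langle -g-\partial_t Y,\,V-\partial_t Y\rangle=\langle Z,V\rangle-\langle Z,\partial_t Y\rangle=\langle Z,V\rangle\le 0$, which is exactly the variational inequality characterising $\partial_t Y=P_{T_K(Y)}(-g)$. Consequently $\|\partial_t Y+g\|=\|-g-P_{T_K(Y)}(-g)\|=\mathrm{dist}(-g,T_K(Y))=\min_{V\in T_K(Y)}\|V+g\|$, which is \eqref{min}. For \eqref{slow1} I would expand, for arbitrary $Z'\in\partial K(Y)$,
\[
\|g+Z'\|^2=\|-\partial_t Y+(Z'-Z)\|^2=\|\partial_t Y\|^2-2\langle\partial_t Y,Z'\rangle+\|Z'-Z\|^2\ge\|\partial_t Y\|^2,
\]
using $\langle\partial_t Y,Z\rangle=0$ and $\langle\partial_t Y,Z'\rangle\le 0$; since equality holds at $Z'=Z$, this gives $\|\partial_t Y\|=\min_{Z'\in\partial K(Y)}\|g+Z'\|$.

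For the converse I would read each identity as a prescription of the velocity. If $Y\in C([0,\infty);\mathbb{H})$ has the stated regularity, satisfies $Y(t)\in K$, $Y(0)=Y_0$, and realises \eqref{min} a.e., then $\partial_t Y(t)\in T_K(Y(t))$ attains $\mathrm{dist}(-g,T_K(Y))$; as the projection onto a closed convex cone is the unique minimiser, $\partial_t Y=P_{T_K(Y)}(-g)$, and the projection inequality places $-g-\partial_t Y$ in the polar cone $\partial K(Y)$, i.e.\ $\partial_t Y+g\in-\partial K(Y)$ is the inclusion \eqref{prob:main}. For \eqref{slow1}, interpreted as selecting the minimal--norm element of the closed convex set $-(g+\partial K(Y))$, the same minimiser is produced, since the computation above identifies that element as $-g-Z=\partial_t Y$. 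In either case \eqref{prob:main} holds, and the uniqueness of its solution (the monotonicity estimate $\tfrac{d}{dt}\|Y_1-Y_2\|^2\le 0$, the transport term being skew-symmetric on $2L$-periodic functions and $\partial K$ monotone, cf.\ \cite{Brenier1}) identifies $Y$ as that unique solution. The remaining technical points — a.e.\ differentiability of $t\mapsto Y(t)$ and the measurable dependence of the projections on $t$ — are furnished by the Lipschitz regularity of the solution.
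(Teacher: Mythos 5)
Your proposal is correct, but it takes a genuinely different route from the paper's. The paper first establishes that $\Bop+\partial K$, with $\Bop(Y)=|f_v|\partial_a Y$, is a maximal monotone operator (Lemmas \ref{B_MMO} and \ref{MMO}, which require the non-degeneracy hypothesis \eqref{non_degen} and, for general Lipschitz $f$, a further approximation argument $f_\e\to f$), then invokes the abstract minimal-selection (``slow solution'') theorem for flows of maximal monotone operators to obtain \eqref{slow}, hence \eqref{slow1}, and finally deduces \eqref{min} from \eqref{slow1} via Moreau's decomposition for the mutually polar cones $T_K(Y)$ and $\partial K(Y)$. You bypass the maximal-monotonicity machinery entirely: your two-sided difference-quotient argument, which extracts $\langle Z(t),\partial_t Y(t)\rangle=0$ from $Y(s)\in K$ for all $s$ together with $\langle Z,Y\rangle=0$ (valid because $K$ is a cone), is a direct, self-contained proof of the chain rule for the indicator function of $K$ along the flow, and both \eqref{slow1} and \eqref{min} then drop out of the same Pythagorean computation for the polar pair $T_K(Y)$, $\partial K(Y)$. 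What the paper's route buys is that its lemmas are needed anyway for the existence theory developed in the same proof; what your route buys is independence from the non-degeneracy condition on $f_v$ and from the citation of the abstract slow-solution theorem, at the price of taking the existence and the Lipschitz-in-$t$ regularity of the solution as given (legitimate here, since the theorem speaks of ``the solution'' and the needed regularity is quoted from Brenier). Your treatment of the converse is in fact more explicit than the paper's, which asserts it without argument; the only caveat, common to both texts, is that \eqref{slow1} must be read as an $\argmin$ statement in the spirit of \eqref{slow}, not as a bare equality of norms, for it to determine $\partial_t Y$ and hence the solution.
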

\begin{proof}
Let $a(v){}={}f_v(v)/|f_v(v)|,$ $v\in[0,1].$ If $|f_v|>0,$ we define $\mathcal{B}(Y){}={}|f_v(v)|\partial_{a(v)}Y,$ where $\partial_a Y$ is the derivative of $Y$ in the direction $a.$ The domain of this operator, $D(\mathcal{B})$ consists of all functions $ Y{}\in{}\mathbb{H}$
that are weakly differentiable in the direction $a(v)$ for a.e. $v\in[0,1]$ and such that
$\partial_a Y{}\in{} \mathbb{H}.$
\begin{lemma}
\label{B_MMO}
Let 
\begin{equation}
\label{non_degen}
c_0{}={}\mathrm{ess} \inf_{[0,1]}|f_v(v)|{}>{}0.
\end{equation}
 Then, $\mathcal{B}$ is a maximal monotone operator.
\end{lemma}
\begin{proof}
Monotonicity of $\mathcal{B}$ follows directly from the definition of $\mathcal{B}$ and periodicity of $Y$ in $x.$ To show maximality, let $W{}\in{}\mathbb{H},$ and $\lambda>0$ consider a problem:
\[
Y{}+{}\lambda |f_v|\partial_aY{}={}W.
\]
For a.e. $v\in[0,1],$ $W_0(v,\cdot)\in L^2(\Pi)$ and and the equation can be integrated along the characteristic to obtain a periodic solutions $Y(v,\cdot).$ The inclusion $Y\in D(\mathcal{B})$ follows from the a priori estimates
\[
\| Y\|{}\leq{}\|W\|,\quad c_0\|\partial_a Y\|{}\leq{}(\sqrt{\lambda})^{-1}\|W\|.
\]
\end{proof}

\begin{lemma}
\label{MMO}
Under the condition on $f$ from the previous lemma, $\mathcal{B}+\partial K$ is a maximal monotone operator on $\mathbb{H}.$
\end{lemma}

\begin{proof}
Consider now a proper, l.s.c., convex function 
\[
K(Y){}={}\left\{
\begin{array}{ll}
0, & x\in K,\\
+\infty, & x\not\in K.
\end{array}\right.
\]
The subdifferential $\partial K$ is a maximal monotone operator. The Yosida approximation of $\partial K(Y),$ equals $\nabla K_\lambda(Y),$ where
\[
K_\lambda(Y){}={}\inf_{\tilde{Y}\in\mathbb{H}}\left( K(\tilde{Y}){}+{}\frac{1}{2\lambda}\|\tilde{Y}-Y\|^2\right),
\]
see Theorem 4, p. 162 of Aubin-Cellina\cite{Aubin1}. It follows that $K_\lambda(Y){}={}\inf_{\tilde{Y}\in K}\frac{1}{2\lambda}\|\tilde{Y}-Y\|^2,$ and
\[
\nabla K_\lambda(Y){}={}\frac{1}{\lambda}\pi_N(Y),
\]
where $\pi_N(Y)$ is a projection of $Y$ onto $N$ -- the polar cone to $K.$ Operator $\frac{1}{\lambda}\pi_N(\cdot)$ is a Lipschitz continuous operator from $\mathbb{H}$ to $\mathbb{H},$  with the Lipschitz constant $\frac{1}{\lambda}$ and is monotone. 

From lemma \ref{B_MMO} and lemma 2.4 of Brezis\cite{Brezis} it follows that $\mathcal{B}{}+{}\frac{1}{\lambda}\pi_N(\cdot)$ is a maximal monotone operator.
So, for any $W\in\mathbb{H}$ and $\alpha>0,$ there is a solution $Y^\lambda$ of 
\begin{equation}
\label{sur}
Y{}+{}\alpha\left(\Bop(Y)+\grad K^\lambda(Y)\right){}={}W.
\end{equation}
Using monotonicity we get 
\[
\|Y^\lambda\|{}\leq{}\|W\|.
\]
Also, since 
\[
\langle \grad K^\lambda(Y^\lambda),\partial_a Y^\lambda\rangle{}={}
\langle \frac{1}{\lambda}\pi_N(Y^\lambda), \partial_a Y^\lambda\rangle{}={}0,
\]
we obtain
\[
\alpha\langle |f_v|\partial_a Y^\lambda,\partial_a Y^\lambda\rangle{}={}-\langle W, \partial_a Y^\lambda\rangle,
\]
and consequently,
\begin{eqnarray}
\label{uniform_1}
&&\|\partial_a Y^\lambda\|{}\leq{} C(\alpha, c_0)\|W\|,\\
\label{uniform_2}
&&\|\grad K^\lambda(Y^\lambda)\|{}\leq{}C(\alpha, \mathrm{ess}\sup_v|f_v(v)|, c_0)\|W\|.
\end{eqnarray}
We want to pass to the limit $\lambda\to0$ in the equation \eqref{sur}. We have shown that all terms in that equation are weakly compact in $\mathbb{H}.$ It remains to show that the sequence $Y^\lambda$ is strongly compact and use the strong-weak closeness of the maximal monotone operator $\partial K.$  Using the equation \eqref{sur} we compute
\begin{align*}
\|Y^\lambda- Y^\mu\|{}\leq{} \alpha (\lambda\langle \grad K^\lambda(Y^\lambda),\grad K^\mu(Y^\mu)\rangle{} +{}\mu\langle \grad K^\lambda(Y^\lambda),\grad K^\mu(Y^\mu)\rangle
\\
{}-{} \lambda\|\grad K^\lambda(Y^\lambda)\|^2{}-{}\mu \|\grad K^\mu(Y^\mu)\|^2.
\end{align*}
This estimate, due to \eqref{uniform_2}, implies that $Y^\lambda$ is Cauchy and converges to some $Y\in\mathbb{H}.$ Moreover $(I+\lambda\partial K)^{-1}(Y^\lambda){}={}\pi_K(Y){}={}Y^\lambda{}-{}\lambda \grad K^\lambda(Y^\lambda)$ converges to $Y.$ Since $\grad K^\lambda(Y^\lambda)\in \partial K((I+\lambda\partial K)^{-1}(Y^\lambda)),$ and $\partial K$ is strongly-weakly closed, it follows that
$\grad K^\lambda(Y^\lambda){}\to{}\partial K(Y),$ and $Y$ is the solution of 
\[
Y{}+{}\alpha(\Bop(Y){}+{}\partial K(Y)){}={}W,
\]  
proving by this the maximality of $\Bop{}+{}\partial K.$
\end{proof}

Consider a Cauchy problem
\begin{equation}
\label{CP1}
\partial_tY{}\in{}-\Bop(Y){}-{}\partial K(Y),\;Y(t=0){}={}Y_0.
\end{equation}
Under the non-degeneracy condition \eqref{non_degen}, $\Bop +\partial K$ is maximal monotone and the  problem \eqref{CP1} has a unique solution with the properties listed in the next theorem, see  theorem 1, p.142 of \cite{Aubin1}.
\begin{theorem1}
\label{exist2}
Let $Y_0{}\in{}D(\mathcal{B})\cap K.$ There is a unique solution $Y(t)$ of \eqref{CP1} for $t\in[0,+\infty),$ with the following properties: $Y(t)\in D(\mathcal{B})\cap K,$
\begin{equation*}
Y{}\in{} C([0,T];\mathbb{H}),\,\forall T>0,\, \partial_t Y,\,\partial_a Y{}\in{} L^\infty(0,+\infty;\mathbb{H}).
\end{equation*}
Moreover, 
\begin{enumerate}


\item If $\grad_x Y_0\in\mathbb{H},$ then for any $t>0,$ $\|\grad_x Y(t)\|{}\leq{}\|\partial_xY_0\|.$

\item $\partial_tY(\cdot)$ is continuous from the right on $[0,+\infty)$ and  $\|\partial_t Y(t)\|{}\leq{}\mathrm{ess}\sup_{u}|f_v(v)|\|\partial_x Y_0\|.$

\item For any $t>0,$
\begin{equation}
\label{slow}
\partial_tY(t){}={}\argmin_{\tilde{Y}\in -|f_v|\partial_aY(t){}-{}\partial K(Y(t))}\|\tilde{Y}\|.
\end{equation}
\end{enumerate}
\end{theorem1}

 Let $f\in Lip([0,1])^n$ and $f_\e$ be sequence of Lipschitz continuous vector functions such that: (i)  $f_\e\to f$ in $C([0,1])^n;$ (ii) $f_{\e,v}{}\to{}f_v,$ a.e. $u\in (0,1);$ (iii) $\| f_{\e,v}\|_{L^\infty((0,1))^n}$ -- uniformly bounded; (iv) for all $\e\in(0,\e_0),$ $\inf_{[0,1]}|f_{\e,v}|>0.$ For each $f_\e$ and $Y_0\in D(\mathcal{B})\cap K,$ there is a solution $Y_\e$ that solves \eqref{CP1} with $f_\e$ and verifies the conclusions of the cited above theorem.  It follows from the same theorem and assumptions on $f_\e$ that norms $\|Y_\e(t)\|,$ $\|\partial_t Y_\e(t)\|,$ $\|\grad_x Y_\e(t)\|$ are uniformly bounded in $(t,\e)\in [0,+\infty)\times (0,\e_0).$ Moreover, by monotonicity we obtain:
\begin{equation}
\|Y_{\e_1}(t){}-{}Y_{\e_2}(t)\|{}\leq{} t(\mathrm{ess}\sup_{v}|f_{\e_1,v}-f_{\e_2,v}|) \|\partial_x Y_0\|,\quad \forall \e_1,\e_2\in (0,\e_0),
\end{equation}
i.e. $Y_\e{}$ is compact in $C([0,+\infty); \mathbb{H}).$ With this and using the fact that $\partial K,$ as a maximal monotone operator is strongly-weakly closed, we obtain $Y{}={}\lim Y_\e$ -- the solution of \eqref{prob:main} with $\partial_t Y, \grad_x Y{}\in{}L^\infty((0,+\infty);\mathbb{H}).$

Let us prove property \eqref{slow1}. Consider operator $\mathcal{A}{}={}\Bop{}+{}\partial K.$ It is monotone and has a maximal extension  $\tilde{\mathcal{A}}.$ Thus $Y(t,x,u)$ is also a solution of $\partial_t Y{}\in{}-\tilde{\mathcal{A}}(Y),$ and by \eqref{slow},
$\|\partial_t Y(t)\|{}={}\min_{Z\in \tilde{\mathcal{A}}(Y(t))} \|Z\|,$ for a.e. $t.$ Since $\partial_t Y\in \mathcal{A}(Y(t)),$ and \eqref{slow1} follows.

Now we can prove \eqref{min}. For the tangent cone $T_K(Y),$ defined in the beginning of this section,  we have that $V\in T_K(Y)$ iff $\forall Z\in \partial K(Y),$ $\langle V,Z\rangle{}\leq 0,$  i.e. $T_K(Y)$ is the polar cone to a convex closed cone $\partial K(Y).$
Property \eqref{slow1} states that $\partial_t Y$ equals $(I-\pi_{\partial K})(-\Bop(Y)),$ where $\pi_{\partial K}$ is the projector onto $\partial K(Y).$ This  can be stated equivalently, that $\partial_t Y$ is the projection of $-f_u\cdot \grad_x Y$ onto $T_K(Y),$ or
\[
\|\partial_t Y(t){}+{}\Bop(Y(t))\|{}={}\min_{V\in T_K(Y(t))}\|V{}+{}\Bop(Y(t))\|,
\] 
for a.e. $t.$
\end{proof}

\end{section}

\begin{section}{Examples}
\label{examples}
Consider a scalar conservation law \eqref{SCL} in one dimension with a convex flux function $f(u).$ We prescribe the initial data
\[
u_0(x){}={}\left\{
\begin{array}{cl}
u^+, & x\in [-L,0]\cup[L/2,L],\\
u^-, & x\in (0,L/2),
\end{array}
 \right.
\]
with $u^+>u^-.$ The weak entropy solution $u(t,x)$ of \eqref{SCL} consists (for small times) of a shock wave propagating from $x=0$ with the speed 
\begin{equation}
\label{shock_speed}
\sigma{}={}(f(u^+)-f(u^-))/(u^+-u^-)
\end{equation}
 and a rarefaction wave centred at $x={}L/2.$ The solution has this structure until the moment the shock wave collides with the r-wave. Let us choose a small $\e>0$ and
consider the kinetic formulation for this problem. We define
\begin{equation}
\label{weakY}
\tilde{Y}(t,x,v){}={}\left\{\begin{array}{ll}
0, & v<u(t,x),\\
1,& v\geq u(t,x).
\end{array}\right.
\end{equation}
$Y(t,x,v)$ is the solution of the variational problem \eqref{kinetic_2}, but it is not differentiable in $x,$ and so we can not test it in the interaction functional \eqref{min}. We will approximate 
$Y(t,x,v)$ by 
\[
Y_\e(0,x,v){}={}Y(0,x,v)*\omega_\e(x),
\]
where $\omega_{\e}(x)$ is the standard (supported on $[x-\e,x+\e]$, non-negative, unit mass) smoothing kernel.

It can be verified that  $Y_\e(t,x,v)$ for all small $t,$ for which the shock wave and the r-wave in $u(t,x)$ are separated by the distance larder  than $4\e,$ is the solution of \eqref{kinetic_4} (or \eqref{kinetic_2}).  Indeed, $Y_\e$ verifies \eqref{kinetic_1} because a convolution in $x$ with a non-negative kernel doesn't change the structure of that equation, and, moreover it can be checked by a computation that the conservation property \eqref{kinetic_5:2} holds as well. The structure of $Y_\e$ is simple; it consists of a smoothed shock wave : for $x\in(\sigma t{}-{}2\e,\sigma t{}+{}2\e),$ it equals
\[
Y_\e(t,x,v){}={}\left\{
\begin{array}{ll}
1 & v>u^+,\\
\int_{-x}^{x+2\e}\omega_\e(y)\,dy & v\in[u^-,u^+],\\
0 & v<u^-,
\end{array}\right.
\] 
and the part that corresponds to the regularization of the rarefaction wave.
Next we would like to find the value of the interaction functional.
Let us fix time  $t=0$ and for $x\in [-L,L],$ consider
\begin{equation}
\label{Min_2}
\min_{V\in T_K(Y_\e(0,x,\cdot))}\|V{}+{}f_v(v)\partial_x Y_\e(0,x,v)\|_{L^2((-1,1))}^2.
\end{equation}
In the next lemma we show that the minimal value of  is zero when $x$ is in the range of the r-wave and it is proportional to the shock strength  $|u^+-u^-|$ for $x$ in the range of the shock discontinuity. For the $x$'s in the later case, $\partial_t Y_\e{}+{}\sigma\partial_x Y_\e{}={}0,$ where $\sigma$ from \eqref{shock_speed}.
\begin{lemma}
Let $\e<L/8.$ The minimizer $V_0$ of \eqref{Min_2} equals
\begin{equation}
\label{V_MIN}
V_0{}={}\partial_t Y_\e(0,x,v) {}={}\left\{ 
\begin{array}{ll}
-\sigma \partial_x Y_\e(0,x,v) & x\in (-2\e,2\e),\,v\in[0,1], \\
-f_v(v)\partial_x Y_\e(0,x,v) & x\in (-2\e+L/2,L/2+2\e), v\in[0,1],\\
0& \mathrm{otherwise,}
\end{array}
\right.
\end{equation}
with $\sigma$ from \eqref{shock_speed}.
The minimal value is proportional to the strength of the shock wave $|u^+-u^-|,$ for $ x\in (-4\e,4\e),$ and is $0$ for other values of $x.$
\end{lemma}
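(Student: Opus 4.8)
The plan is to exploit the locality in $x$ of the interaction functional recorded in the introduction, so that for each fixed $x$ the problem \eqref{Min_2} becomes the $L^2_v$-projection of the target $W(v):=-f_v(v)\,\partial_x Y_\e(0,x,v)$ onto the tangent cone $T_K(Y_\e(0,x,\cdot))$. Since $Y_\e$ is itself a solution of \eqref{kinetic_4} (verified in the paragraph preceding the lemma), the identity \eqref{min} already guarantees that this minimizer is $\partial_t Y_\e$; the genuine content of the lemma is therefore to compute the projection explicitly and to evaluate the residual. I would compute the projection by hand and, in parallel, compute $\partial_t Y_\e$ directly, and then check that the two agree.

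First I would describe $T_K$ at the relevant point. At a fixed $x$ in the shock range, $Y_\e(0,x,\cdot)$ is a three-step staircase: it equals $0$ on $[0,u^-)$, a plateau value $c(x)\in(0,1)$ on $[u^-,u^+]$, and $1$ on $(u^+,1]$, so that $\partial_x Y_\e=c'(x)$ is supported on the plateau. Testing $\partial_v(Y_\e+tV)=\partial_v Y_\e+t\,\partial_v V\ge 0$ and using that $Y_\e$ has strictly positive jumps at $u^\pm$ (which leave slack for small $t$), the only surviving constraint is that $V$ be nondecreasing on each flat interval, with the jumps of $V$ at $u^\pm$ unconstrained. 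Hence the cone factors over the three intervals and the projection is computed interval by interval: on $[0,u^-)$ and $(u^+,1]$ the target vanishes so the projection is $0$, while on $[u^-,u^+]$ the problem reduces to projecting $W$ onto the cone of nondecreasing functions, i.e. to isotonic regression.

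Then I would treat the two cases. In the shock layer, convexity makes $f_v$ nondecreasing and $c'(x)>0$, so $W=-c'(x)f_v$ is nonincreasing on $[u^-,u^+]$; the $L^2$-projection of a nonincreasing function onto the monotone cone is its mean, and $\tfrac{1}{u^+-u^-}\int_{u^-}^{u^+}f_v(v)\,dv=\sigma$ by the fundamental theorem of calculus and \eqref{shock_speed}. This gives $V_0=-\sigma c'(x)=-\sigma\,\partial_x Y_\e$ on the plateau and $0$ elsewhere, which is the shock line of \eqref{V_MIN}; the residual is $c'(x)^2\int_{u^-}^{u^+}(f_v-\sigma)^2\,dv$, a strictly positive variance of $f_v$ that vanishes precisely when $u^+=u^-$, yielding the asserted dependence on the shock strength. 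In the rarefaction layer the plateau value $d(x)$ is decreasing, $\partial_x Y_\e=d'(x)<0$, so $W=-d'(x)f_v$ is already nondecreasing and lies in $T_K$; the projection is $W$ itself, $V_0=-f_v\,\partial_x Y_\e$, with zero residual. Away from both transition layers $\partial_x Y_\e=0$ and the residual is $0$. The $\e$-bookkeeping of the intervals $(-2\e,2\e)$ and $(-4\e,4\e)$ follows by reading off the support of $c'$ from the explicit formula for $Y_\e$ stated above the lemma. Finally I would confirm the equality $V_0=\partial_t Y_\e$ directly: in the shock layer the smoothed profile translates rigidly at speed $\sigma$, so $\partial_t Y_\e=-\sigma\,\partial_x Y_\e$, and in the rarefaction layer $Y$ solves the free transport $Y_t+f_v\partial_x Y=0$, a structure preserved under convolution in $x$ (the coefficient $f_v(v)$ is $x$-independent), so $\partial_t Y_\e=-f_v\,\partial_x Y_\e$.

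The hard part will be the rigorous description of $T_K$ at the non-smooth staircase point $Y_\e(0,x,\cdot)$ and the justification that the projection separates over the flat intervals; once that is in place, the remaining ingredients are the elementary isotonic-regression fact (a nonincreasing target projects to its mean) and the averaging identity producing $\sigma$, both of which are routine. A secondary point of care is matching the $\e$-dependent supports of $\partial_x Y_\e$ to the precise intervals quoted in the statement.
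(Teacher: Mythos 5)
Your proposal is correct and follows essentially the same route as the paper: reduce to a fixed $x$, observe that on the three-valued staircase the tangent cone constrains $V$ only to be nondecreasing on the plateau $[u^-,u^+]$, and then solve the resulting isotonic-regression problem --- projecting the nonincreasing target $-f_v\,\partial_xY_\e$ to its mean, which produces $\sigma$ via $\frac{1}{u^+-u^-}\int_{u^-}^{u^+}f_v\,dv=\sigma$ in the shock layer, and noting that the target is already monotone (hence residual zero) in the rarefaction layer. The only additions beyond the paper's argument are the explicit residual formula $c'(x)^2\int_{u^-}^{u^+}(f_v-\sigma)^2\,dv$ and the direct check that $V_0=\partial_tY_\e$, both consistent with the paper.
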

\begin{proof} Assume that $\sigma>0.$ The other case it treated similarly. The approximate initial datum $Y_\e(0,x,v),$ for $x\in(-2\e,2\e),$ equals
\[
Y_\e(0,x,v){}={}\left\{
\begin{array}{ll}
1 & v>u^+,\\
\int_{-x}^{x+2\e}\omega_\e(y)\,dy & v\in[u^-,u^+],\\
0 & v<u^-,
\end{array}\right.
\] 
and for $x\in (-2\e+L/2,L/2+2\e),$
\[
Y_\e(0,x,v){}={}\left\{
\begin{array}{ll}
1 & v>u^+,\\
\int^{L/2-x}_{L/2-2\e}\omega_\e(y)\,dy & v\in[u^-,u^+],\\
0 & v<u^-.
\end{array}\right.
\] 
$\partial_x Y_\e$ in the corresponding ranges equals
\[
\partial_x Y_\e(0,x,v){}={}\left\{
\begin{array}{ll}
0 & v>u^+,\\
\omega_\e(-x)& v\in[u^-,u^+],\\
0 & v<u^-,
\end{array}\right.
\]
for $x\in(-2\e,2\e),$  
and 
\[
\partial_x Y_\e(0,x,v){}={}\left\{
\begin{array}{ll}
0 & v>u^+,\\
-\omega_\e(L/2-x) & v\in[u^-,u^+],\\
0 & v<u^-,
\end{array}\right.
\] 
for  $x\in (-2\e+L/2,L/2+2\e).$
Then, with 
\[
V\in T_K(Y_\e(0,x,\cdot)){}={}\mbox{$L^2((0,1))$--closure of }\;{}  \{ h(\tilde{Y}(v)-Y_\e(0,x,v))\;:\;h\geq0,\,\partial_v \tilde{Y}\geq0\},
\]
\begin{eqnarray}
\| V{}+{}f_v(v)\partial_xY_\e(0,x,v)\|^2 &{}={}&
\int_0^{u^-}|V(v)|^2\,dv{}+{}\int_{u^+}^1|V(v)-1|^2\,dv \notag \\
&&{}+{}\int_{u^-}^{u^+}|V(v){}+{}f_v(v)\partial_x Y_\e(0,x,v)|^2\,dv.
\end{eqnarray}
Notice that, due to the fact that $Y_\e(0,x,v)$ takes only three values, all functions $V(v),$
such that  $V{}={}0,$ for $v\in[0,u^-),$ $V(v){}={}1,$ for $v\in(u^+,1],$ and $V(v)$ is non-decreasing on $[u^-,u^+],$ belong to $T_K(Y_\e(0,x,\cdot)).$ Thus,
\begin{multline}
\min_{V\in T_K(Y_\e(0,x,\cdot))} \| V{}+{}f_v(v)\partial_xY_\e(0,x,v)\|^2  \notag \\
\label{MIN3}
{}={}
\min_{V'(v)\geq0,\,v\in[u^-,u^+]} \int_{u^-}^{u^+}|V(v){}+{}f_v(v)\partial_x Y_\e(0,x,v)|^2\,dv.
\end{multline}
For  $x\in (-2\e+L/2,L/2+2\e)$ we can take $V{}={}\omega_\e(L/2-x)f_v(v),$ for $v\in[u^-,u^+].$ Such $V$ gives zero value of the functional. For the shock discontinuity range $x\in(-2\e,2\e),$ because $f_v(v)\partial_x Y_\e(0,x,v)$ is non-decreasing in $v,$
minimum  will be achieved on constant functions $V(v){}={}c:$
\begin{multline}
\min_{V'(v)\geq0,\,v\in[u^-,u^+]} \int_{u^-}^{u^+}|V(v){}+{}f_v(v)\partial_x Y_\e(0,x,v)|^2\,dv \notag \\
{}={}\min_{c} \int_{u^-}^{u^+}|c{}+{}f_v(v)\partial_x Y_\e(0,x,v)|^2\,dv.
\end{multline}
The later functional is minimized for $c{}={}-\sigma\partial_x Y_\e(0,x,v).$ This establishes \eqref{V_MIN}. It is easily verified that with such minimizer the value of \eqref{Min_2} is proportional to $|u^+-u^-|.$

\end{proof}

Next we will show that the regularizations $Y_\e{}={}Y*\sigma_\e(x)$ of the kinetic density $Y$ of a weak entropy solution $u(t,x)$ are not, in general, solutions of the variational problems \eqref{kinetic_4} (or \eqref{kinetic_2}).
For that we consider  a conservation law:
\begin{equation}
\label{scl1}
u_t{}+{}((u-\frac{1}{2})^2)_x{}={}0,
\end{equation}
with $2L$ periodic data 
\[
u_0(x){}={}\left\{\begin{array}{ll}
1, & x\in[-L,0],\\
0, & x\in(0,1),\\
1, & x\in[1,L].
\end{array}
\right.
\]
The corresponding entropy solution $u(t,x)$ consists of a stationary shock wave at $x=0,$ and a r-wave centred at $x=1$  that propagates to the left with speed $1.$ Moreover, the values of $u(t,x)$ in the r-wave depend linearly on $\frac{x}{t}.$ Let, as in the previous example, $Y(t,x,v)$ be the kinetic function of $u(x,t)$ and  $Y_\e(t,x,v){}={}Y(t,x,v)*\omega_\e(x),$
where for the definiteness we take $\omega_\e(x)$ to be  smooth, non-negative function, compactly supported on $[-2\e,2\e]$ and equal to $1$ on $[-\e,\e].$ It was shown in the previous example that for small times $Y_\e$ is a solution of \eqref{kinetic_4}. Consider time $t=1$ -- the moment the r-wave reaches shock. $Y_\e(1,x,v)$ is a smoothing in $x$ direction of the characteristic function of a triangle $\{(x,v)\,:\,x\in(0,2),\,v\in(x,1)\},$ and $Y_\e(0,x,v)$ is a smoothing in $x$ direction of the characteristic function of a square $\{(x,v)\,:\,x\in(0,1),\,v\in(0,1)\},$ 
For all small $\e,$ one directly verifies that
\[
\int_{-L}^L \int_0^1|Y^\e(1,x,v)|^2\,dxdv{}>{}\int_{-L}^L\int_0^1|Y^\e(0,x,v)|^2\,dxdv,
\]
violating the conservation property \eqref{kinetic_5:2}.

\end{section}

\end{document}